\documentclass[12pt]{amsart}

\usepackage{amsmath}
\usepackage{amssymb}

\newtheorem{theorem}{Theorem}[section]

\newtheorem{proposition}[theorem]{Proposition}

\newtheorem{remark}{Remark}



\def\Z{\mathbb Z}

\title[Evaluation of Polynomials over Finite Rings]{Evaluation 
of Polynomials over Finite Rings via Additive Combinatorics}
\author{Gyula K\'arolyi \and Csaba Szab\'{o}}

\begin{document}
\maketitle

\begin{abstract}
We give an improved polynomial bound on the complexity of 
the equation solvability problem, or more generally, of finding the value
sets of polynomials over finite nilpotent rings. Our proof depends
on a result in additive combinatorics, which may be of independent 
interest.
\end{abstract}

\section{Introduction}

\noindent
Several `classical' algebraic  problems are investigated from the 
computational perspective.
The equivalence problem and the equation solvability problem for
various algebraic structures has received increasing attention recently. Both
problems originate from the theory of rings and fields.  
The equivalence problem over a finite ring asks whether
or not two polynomials define the same function over the given ring.  
The equation solvability problem is  basically looking for 
the existence of a root or {for} the solution of an equation.  
It asks whether or not two polynomials over a ring have at least 
one substitution, where they attain the same value.
Both of these problems are decidable for finite rings: 
it is enough to substitute every element of the ring into the variables.

Early investigations into the computational complexity of the
equivalence problem were carried out by computer 
scientists at Syracuse University. In the early 1990s it was shown by
Hunt and Stearns \cite{HS} that the equivalence problem of a finite commutative
ring either can be decided in polynomial time or has
co-NP-complete complexity. Later Burris and Lawrence proved in \cite{bl} that
the same holds for rings in general: the problem can be decided in
polynomial time if the ring is nilpotent (this part is already found in 
\cite{HS}), and it is co-NP-complete otherwise.

The solvability problem has a somewhat shorter history.  Although the
borderline is again nilpotency, the proofs are more
complicated. Following the idea of Burris and Lawrence it is not hard
to verify that the solvability problem for non-nilpotent rings is NP-complete.  
Horv\'ath \cite{nilpHorvath} has proved that for nilpotent rings the
problem can be decided in polynomial time. 
More precisely, let $R$ be a nilpotent ring
of size $m$ and nilpotency class $\ell$, and let 
$f$ be a polynomial over $R$ in the (non-commuting) variables
$x_1,\ldots, x_n$. In \cite{nilpHorvath} it is shown that 
$O(||f||^t)$ substitutions suffice to decide, whether $f$ has a root or not.
Here $||f||$ represent the number of operations used to present $f$. 
The exponent $t=t(m,\ell)$, obtained in terms of the Ramsey-number for a
particular coloured hypergraph, is rather enormous, though. 

In the present paper, applying a recent result in additive combinatorics  
the exponent $t$ is reduced to $m\log m$; see Theorem \ref{main} for a  
more precise formulation. In particular, it is proved that the range of a
polynomial can be found using $O\left(n^{m(\ell-1)}\right)$ many 
substitutions.

Note that the equivalence and solvability  problems for finite groups 
has proved to be far more challenging. 
For results and detailed references see for example 
\cite{ExtGroup,a4hosza} and the most recent paper 
of Horv\'ath \cite{metaAbel}, 
where most of the existing results for equivalence and equation 
solvability are brought under a unified theory. 
For results about these problems on finite monoids and semigroups we refer to
\cite{AlmeidaVolkovGoldbergENG,KlimacoNPidcheck,seifszabo,tes};
concerning general algebras see e.g. \cite{TermeqSat}.

\section{Terminology}\label{sec:term}

\noindent
{\sl Polynomials.} \
Having a diverse readership in mind we introduce our notions in a somewhat
informal, albeit precise manner. Let $R$ be an arbitrary ring,
and $x_1,\dots,x_n$ symbols we think of as non-commuting variables. By a 
polynomial in these variables over $R$ we mean an expression that can be
constructed in a finite number of steps according to the following rules:
(i) every element of $\{x_1,\dots,x_n\}\cup R$ is a polynomial; (ii) if $f$
is a polynomial, then $-f$ is also a polynomial; (iii) if $f,g$ are
polynomials, then $f+g$ is also a polynomial; and (iv) if $f,g$ are
polynomials, then $fg$ is also a polynomial. The set of all such polynomials
we denote by $R[x_1,\dots,x_n]$.

Evaluating a polynomial $f\in R[x_1,\dots,x_n]$ at the point ${\bf c}=
(c_1,\dots,c_n)\in R^n$, or substituting ${\bf c}$ in $f$, means replacing 
each occurance of the variable $x_i$ in $f$ by the corresponding ring element
$c_i$, for every $1\le i\le n$, the result is an element of $R$ denoted by
$f({\bf c})$. This way $f$ defines a function from $R^n$ to $R$; two
polynomials are equivalent if they define the same function. For example,
for $a,b\in R$ the polynomials $(ax)b, a(xb), -(ax)(-b)\in R[x]$ are
equivalent, but may not be equivalent to $(ab)x$. One may even identify the
first three, but this is irrelevant to our purposes. The polynomials
$x_1(x_2x_1)$ and $x_1x_2$ are also equivalent over $\Z/2\Z$.

Each polynomial is equivalent to a `standard' polynomial, that is, to one
that can be written as a sum of monomials. More precisely, a monomial
is a polynomial that can be constructed using only the rules (i), (ii) 
and (iv), where (ii) is only applied in the last step, if at all. 
Suppressing the brackets each monomial can be represented in a canonical form
as $z_1\cdots z_t$ or $-z_1\cdots z_t$, where each 
$z_i\in \{x_1,\dots,x_n\}\cup R$. If two monomials have the same canonical
form, then they are clearly equivalent. A standard polynomial is one obtained
from monomials using merely the rule (iii). Once again, up to equivalence it
does not matter, in which order the additions are executed. 
\medskip

\noindent
{\sl The computational model.} \ 
In our model $R$ denotes a fixed finite ring, presented by its addition
and multiplication tables. Thus, its cardinality is $|R|=O(1)$. We may assume
that the table of additive inverses and the zero element are also given,
for they can be computed from the addition table in $O(1)$ time. 
In accordance with rules (i)--(iv),
the length $||f||$ of a polynomial $f\in R[x_1,\dots,x_n]$ is defined as
follows: $||f||=0$ if $f\in \{x_1,\dots,x_n\}\cup R$, and if 
$||f||,||g||$ are defined, then set $||-f||=||f||+1$ and $||f+g||=||fg||=
||f||+||g||+1$. Thus, $||f||$ is the number of operations used to define $f$,
that may be regarded as the complexity of evaluating $f$ at any single point of
$R^n$. Our goal is to measure the complexity of the problems mentioned in the
introduction in terms of $||f||$. 
\medskip

\noindent
{\sl Nilpotency.} \
In what follows, $R^{(i)}$ will denote the ideal
consisting of all finite sums of terms of the form $c_1\cdots c_i$,
where $c_1,\dots,c_i\in R$. The ring $R$ is nilpotent if there is a positive 
integer $i$ such that $R^{(i)}=0$; the smallest such $i$ is called the
nilpotency class of $R$. The standard example for a ring of nilpotency 
class $\ell$ is the ring of strictly upper triangular $\ell\times \ell$
matrices over an arbitrary field.

\section{A result in additive combinatorics}\label{sec:prel}

\noindent
A standard application of the pigeonhole principle gives that for any sequence
$g_1,g_2,\ldots,g_n$ of a finite group $G$ there is a subsequence
$g_{i_1},g_{i_2},\ldots,g_{i_t}$ with $t< |G|$ such that 
$g_{i_1}g_{i_2}\ldots g_{i_t}=g_1g_2\ldots g_n$. Here we need a more general
result that we can establish for finite abelian $p$-groups.

Let $p$ denote a prime and let $G$
be an arbitrary finite abelian $p$-group, written additively.
Then there exist unique positive integers $r$ and $\alpha_1\le 
\ldots\le \alpha_r$ such that 
$G\cong \Z/p^{\alpha_1}\Z\oplus\ldots\oplus\Z/p^{\alpha_r}\Z$.
For a finite set $H$ and a nonnegative integer $k$ we denote by $P_k(H)$
the set of all subsets $X$ of $H$ with $|X|\le k$. Given a function 
$\varphi\colon P_k(H)\to G$, associate with each subset $U$ of $H$ the value
$$\overline{\varphi}(U)=\sum_{X\subseteq U, |X|\le k}\varphi(X).$$
Thus, $\overline{\varphi}(H)$ is simply the total sum of the values of
$\varphi$ over $P_k(H)$.
\bigskip

\begin{theorem}
Let $H$ be a finite set, $k$ a nonnegative integer.
For any function 
$\varphi\colon P_k(H)\to \Z/p^{\alpha_1}\Z\oplus\ldots\oplus\Z/p^{\alpha_r}\Z$, 
there is a set $U\subseteq H$ such that 
$$|U|\le k\sum_{j=1}^r(p^{\alpha_j}-1)$$
and $\overline{\varphi}(U)=\overline{\varphi}(H)$.
\label{additive}
\end{theorem}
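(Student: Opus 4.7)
The plan is to prove the theorem by an iterative shrinking of $H$ that preserves $\overline{\varphi}(\cdot)=\overline{\varphi}(H)$ at every step. The engine is the following removal lemma: whenever $|H|$ exceeds $k\sum_{j=1}^r(p^{\alpha_j}-1)$, there is a nonempty $T\subseteq H$ with $\overline{\varphi}(H\setminus T)=\overline{\varphi}(H)$. Granted the lemma, the theorem follows by induction on $|H|$: if the size bound is met take $U=H$; otherwise remove such a $T$ and invoke the hypothesis on $(H\setminus T,\varphi|_{P_k(H\setminus T)})$, observing that the total sum is unchanged.

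To establish the removal lemma, I would encode a candidate $T$ by indicator variables $t_h\in\{0,1\}$ and use inclusion--exclusion to write
\[
\Sigma(t) \;:=\; \overline{\varphi}(H)-\overline{\varphi}(H\setminus T(t)) \;=\; \sum_{\emptyset\ne S\subseteq H,\,|S|\le k} b_S\prod_{h\in S} t_h,
\]
where $b_S=(-1)^{|S|+1}\sum_{X\supseteq S,\,|X|\le k}\varphi(X)\in G$. This $\Sigma$ is a polynomial in the $t_h$ of total degree at most $k$ with $\Sigma(\mathbf{0})=0$, and the task reduces to finding a nonzero $t\in\{0,1\}^H$ with $\Sigma(t)=0$ in $G$.

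For the elementary abelian case $G\cong(\Z/p\Z)^r$, the standard trick applies: substitute $t_h=w_h^{p-1}$ with $w_h\in\F_p$, turning $\Sigma$ into a system of $r$ polynomials in $\F_p[w_1,\ldots,w_{|H|}]$ of total degree at most $rk(p-1)=k\sum_j(p^{\alpha_j}-1)<|H|$. The Chevalley--Warning theorem then shows that the common zero set in $\F_p^{|H|}$ has cardinality divisible by $p$; since $\mathbf{0}$ is a zero, a nontrivial $w^*$ exists, and the induced $t^*=((w_h^*)^{p-1})$ is a nonzero zero of $\Sigma$ as required.

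The central obstacle is the passage to general abelian $p$-groups, since a cyclic summand $\Z/p^{\alpha_j}\Z$ with $\alpha_j\ge 2$ is not an $\F_p$-module and the naive Chevalley--Warning budgets only $k(p-1)$ per component rather than the needed $k(p^{\alpha_j}-1)$. My proposed remedy is a secondary induction on $\sum_j\alpha_j$: first apply the elementary abelian case to $\varphi$ composed with the quotient $G\to G/pG\cong\F_p^r$ to obtain a candidate $U_0$ solving the problem modulo $pG$, and then recursively correct the residual error $\overline{\varphi}(H)-\overline{\varphi}(U_0)\in pG$ by invoking the theorem on an auxiliary polynomial valued in the strictly smaller $p$-group $pG$. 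The delicate point will be to design the auxiliary problem so that the two size budgets telescope exactly to $k\sum_j(p^{\alpha_j}-1)$; if this direct combination fails, an alternative is to invoke a stronger Chevalley--Warning variant (such as Ax--Katz, or Olson's polynomial method for $\Z/p^\alpha\Z$-valued polynomials) providing the required higher $p$-adic divisibility of the zero count.
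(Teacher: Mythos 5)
Your iterative-shrinking framework, the encoding of a deletion step as finding a nontrivial $\{0,1\}$-zero of a degree-$\le k$ system, and your treatment of the elementary abelian case via the substitution $t_h=w_h^{p-1}$ and Chevalley--Warning are all sound; in spirit this matches the paper, which also repeatedly deletes elements while preserving $\overline{\varphi}$. The genuine gap is precisely where you flag uncertainty: the case $\alpha_j\ge 2$, which is the main content of the theorem. The proposed secondary induction (solve modulo $pG$, then correct the residual inside $pG$) does not go through as described. Once you have $U_0$ with $\overline{\varphi}(U_0)\equiv\overline{\varphi}(H)\pmod{pG}$, any further modification of $U_0$ changes $\overline{\varphi}$ by an element of $G$ that need not lie in $pG$, so the residual cannot be repaired by ``an auxiliary problem valued in $pG$'' unless the modification is additionally constrained to have vanishing mod-$p$ effect --- and that is a coupled problem of the same type, not a smaller instance of the theorem. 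For $k=1$ such arguments can sometimes be rescued by grouping elements into zero-sum-mod-$p$ blocks and then working with block sums in $pG$, but for $k\ge 2$ the quantity $\overline{\varphi}$ is not additive over disjoint blocks (sets $X$ with $|X|\le k$ may meet several blocks), so the budgets do not telescope to $k\sum_j(p^{\alpha_j}-1)$. The fallback you mention is also inadequate as stated: Ax--Katz gives $p$-adic divisibility of the number of zeros of polynomials over $\F_p$, and does not by itself produce nontrivial $\{0,1\}$-solutions of congruences modulo $p^{\alpha_j}$.

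The missing tool is the prime-power extension of Chevalley's theorem with restricted variables (Schanuel's theorem, in the form due to Brink, Theorem~\ref{brink} of the paper). Applied to $g_j=f_j-\overline{\varphi}_j(H)$ over $\Z/p^{\alpha_j}\Z$, with the variables of the current support restricted to $\{0,1\}$ and the others to $\{0\}$, it produces a second common solution whenever the support exceeds $k\sum_j(p^{\alpha_j}-1)$, since $\sum_j(p^{\alpha_j}-1)\deg g_j\le k\sum_j(p^{\alpha_j}-1)$; the new solution has strictly smaller support, and iterating gives the theorem. Replacing your mod-$p$ induction by an appeal to this result turns your outline into essentially the paper's proof; without it (or an equivalent prime-power Chevalley-type statement proved from scratch), the argument is incomplete.
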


\begin{remark}
It is easily seen that the case $k=1$ of the above theorem 
is equivalent to Olson's classical result \cite{Olson} on the
Davenport constant of finite abelian $p$-groups.
\end{remark}

\begin{remark}
The bound on the cardinality of $U$ cannot be improved upon for any $k$. 
Assume that $|H|\ge k\sum_{j=1}^r(p^{\alpha_j}-1)$. Select pairwise disjoint
$k$-element subsets $H_{j,l}$ of $H$ for $1\le j\le r$, 
$1\le l< p^{\alpha_j}$. Denote by $e_j$ any generating element of the $j$th
direct summand $\Z/p^{\alpha_j}\Z$, and consider the function $\varphi$
that assigns $e_j$ to each set $H_{j,l}$ and 0 to any other element of 
$P_k(H)$. Then $\overline{\varphi}(H)=\sum_{j=1}^r(p^{\alpha_j}-1)e_j$, and
it is obvious that $\overline{\varphi}(U)=\overline{\varphi}(H)$ implies
$U\supseteq \cup H_{j,l}$.  
\end{remark}

\noindent
Our proof depends on the following result of Brink \cite{Brink},
which can be viewed as a generalization of Chevalley's well-known
theorem \cite{Chev} and its somewhat forgotten extension 
by Schanuel \cite{Scha}.
\bigskip

\begin{theorem}
Let $p$ denote a prime and let
$A_1,\ldots,A_n$ be nonempty subsets of $\Z$ such that
the natural homomorphism from $\Z$ to $\Z/p\Z$ restricted to $A_i$ is injective
for each $1\le i\le n$. Assume that the polynomials
$f_1,\ldots,f_r\in \Z[x_1,\ldots,x_n]$ satisfy
$$\sum_{i=1}^n(|A_i|-1)> \sum_{j=1}^r(p^{\alpha_j}-1)\deg f_j.$$
If the set
$\{{\bf a}\in A_1\times\ldots\times A_n\mid 
f_j({\bf a})\equiv 0 \pmod{p^{\alpha_j}},\ 1\le j\le r  \}$ is not empty,
then it has at least two different elements.
\label{brink}
\end{theorem}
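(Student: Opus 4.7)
The plan is to induct on $|H|$. If $|H|\le k\sum_{j=1}^r(p^{\alpha_j}-1)$, the choice $U=H$ trivially works. Otherwise I aim to produce a proper subset $U'\subsetneq H$ with $\overline\varphi(U')=\overline\varphi(H)$, and then apply the induction hypothesis to the restriction $\varphi|_{P_k(U')}\colon P_k(U')\to \Z/p^{\alpha_1}\Z\oplus\ldots\oplus\Z/p^{\alpha_r}\Z$. Since for any $U\subseteq U'$ one has $\overline{\varphi|_{P_k(U')}}(U)=\overline\varphi(U)$, a witness delivered by induction on the smaller problem is automatically a witness for $\varphi$.

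To produce $U'$, label $H=\{h_1,\ldots,h_N\}$ and introduce $0/1$ variables $x_i$ encoding whether $h_i\in U$. For any $X\in P_k(H)$ the indicator $[X\subseteq U]$ equals the squarefree monomial $\prod_{h_i\in X}x_i$ of degree $|X|\le k$. Choose for each $j$ an integer lift $\varphi_j(X)\in\{0,1,\ldots,p^{\alpha_j}-1\}$ of the $j$th coordinate of $\varphi(X)$. Then the equation $\overline\varphi(U)=\overline\varphi(H)$ in $G$ is equivalent to the system
$$f_j(x_1,\ldots,x_N):=\sum_{X\in P_k(H)}\varphi_j(X)\prod_{h_i\in X}x_i\;-\;\overline\varphi_j(H)\;\equiv\;0\pmod{p^{\alpha_j}},\qquad 1\le j\le r,$$
where each $f_j\in\Z[x_1,\ldots,x_N]$ is a polynomial of degree at most $k$.

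Now I would apply Theorem \ref{brink} with $A_1=\cdots=A_N=\{0,1\}$, each of which injects into $\Z/p\Z$. Its central inequality becomes
$$\sum_{i=1}^N(|A_i|-1)=N\;>\;k\sum_{j=1}^r(p^{\alpha_j}-1)\;\ge\;\sum_{j=1}^r(p^{\alpha_j}-1)\deg f_j,$$
which holds by the assumption $|H|>k\sum_j(p^{\alpha_j}-1)$, while the common zero set is nonempty because $(1,\ldots,1)$ (corresponding to $U=H$) is always a solution. Brink's theorem therefore supplies a second solution $(x_1,\ldots,x_N)\neq(1,\ldots,1)$, encoding a proper subset $U'\subsetneq H$ with $\overline\varphi(U')=\overline\varphi(H)$, and the induction step is complete.

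The main obstacle is the translation between the set-theoretic quantity $\overline\varphi(U)$ and the polynomial system that Brink's theorem digests: one must recognise $\overline\varphi(U)$, viewed as a function of the indicator variables, as a multilinear integer polynomial of degree at most $k$, so that the degree budget in Brink's inequality aligns exactly with the coefficient $k$ in the target cardinality bound. Once this dictionary is in place, the application of Theorem \ref{brink} and the descent on $|H|$ proceed essentially automatically.
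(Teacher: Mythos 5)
There is a fundamental mismatch here: what you have written is not a proof of the stated theorem. The statement in question is Brink's theorem (Theorem \ref{brink}), a Chevalley--Schanuel type result asserting that a nonempty solution set of a system of congruences $f_j({\bf a})\equiv 0 \pmod{p^{\alpha_j}}$ over a box $A_1\times\cdots\times A_n$ of restricted variables must contain a second point once $\sum_i(|A_i|-1)$ exceeds the weighted degree sum $\sum_j(p^{\alpha_j}-1)\deg f_j$. The paper does not prove this result at all --- it imports it from Brink's article \cite{Brink} as a black box. Your argument, by contrast, is a proof of Theorem \ref{additive} (the statement about $\varphi\colon P_k(H)\to G$ and the small subset $U$ with $\overline{\varphi}(U)=\overline{\varphi}(H)$), and its central step is, in your own words, ``Now I would apply Theorem \ref{brink}.'' As a proof of Theorem \ref{brink} this is circular: you invoke the very statement you are supposed to establish. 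As a proof of Theorem \ref{additive} it is essentially the paper's own argument (the paper phrases the descent as an explicit recursion on index sets $I_s$ rather than as induction on $|H|$, but the encoding by multilinear polynomials of degree at most $k$ in $0/1$ variables and the use of the point ${\bf 1}$ as the guaranteed first solution are identical), so you have in effect reproduced the proof of the wrong theorem.

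To actually prove the statement in question you would need a genuinely different toolkit: the case $\alpha_1=\cdots=\alpha_r=1$ with $A_i=\{0,1,\dots,p-1\}$ is Chevalley's theorem, the prime-power moduli require Schanuel's extension, and the restricted variable sets $A_i$ require the additional idea in Brink's paper (interpolating the indicator of $A_i$ by a polynomial of degree $|A_i|-1$ and controlling $p$-adic valuations of the resulting character/counting sums). None of this appears in your write-up, and nothing in your descent argument can substitute for it, since the descent consumes Theorem \ref{brink} rather than producing it.
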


\noindent{\it Proof of Theorem \ref{additive}.} 
Putting $|H|=n$, enumerate the elements
of $H$ as $h_1,\ldots,h_n$. 
We may assume that $n\ge k\sum_{j=1}^r(p^{\alpha_j}-1)+1$,
for the statement is valid with $U=H$ otherwise. 
For $1\le j\le r$ denote by $\varphi_j\colon P_k(H)\to \Z/p^{\alpha_j}\Z$ 
the $j$th coordinate function of $\varphi$, and
consider the polynomial $f_j\in (\Z/p^{\alpha_j}\Z)[x_1,\ldots,x_n]$
defined by
$$f_j({\bf x})=\sum_{\nu=0}^k
\left(\sum_{1\le i_1<\ldots<i_\nu\le n}\varphi_j(\{h_{i_1},\ldots,h_{i_\nu}\})
\prod_{\alpha=1}^\nu x_{i_\alpha}\right).$$
Thus, letting $f=(f_1,\ldots,f_r)$ we have 
$f({\bf 0})=\varphi(\emptyset)=\overline{\varphi}(\emptyset)$, 
$f({\bf 1})=\overline{\varphi}(H)$ and 
$$f({\bf x})=\overline{\varphi}(U)
\quad \hbox{\rm with}\quad  U=U({\bf x})=\{h_i\mid x_i=1\}$$
for ${\bf x}\in \{0,1\}^n$ in general.

Define sequences $I_s\subseteq \{1,\ldots, n\}$ and
${\bf a}{(s)}\in \{0,1\}^n$
recursively as follows. 
Put $I_0=\{1,\ldots, n\}$ and ${\bf a}{(0)}={\bf 1}$, then 
$$I_0=\{i\mid {\bf a}{(0)}_i=1\}\quad \hbox{\rm and} \quad
f({\bf a}(0))=\overline{\varphi}(H).$$
Assume that $I_s$ and ${\bf a}{(s)}$ are already defined such that
$$I_s=\{i\mid {\bf a}{(s)}_i=1\}\quad \hbox{\rm and} \quad
f({\bf a}(s))=\overline{\varphi}(H).$$
If $|I_s|> k\sum_{j=1}^r(p^{\alpha_j}-1)$, then
put $A_i=\{0,1\}$ for $i\in I_s$ and $A_i=\{0\}$ for $i\in
I_0\setminus I_s$. Note that ${\bf a}(s)\in A_1\times\ldots\times A_n$. 
For each $1\le j\le r$ the polynomial 
$g_j=f_j-\overline{\varphi}_j(H)\in (\Z/p^{\alpha_j}\Z)[x_1,\ldots,x_n]$ 
vanishes at ${\bf a}(s)$ and
$$\sum_{i=1}^n(|A_i|-1)=|I_s|>
k\sum_{j=1}^r(p^{\alpha_j}-1) \ge \sum_{j=1}^r(p^{\alpha_j}-1)\deg g_j.$$
It follows from Theorem \ref{brink} that there exists an element
$${\bf a}\in (A_1\times\ldots \times A_n)\setminus\{{\bf a}(s)\}$$ 
such that 
$g_i({\bf a})=0$ in $\Z/p^{\alpha_i}\Z$ for every $i=1,2,\dots,r$,
and accordingly $f({\bf a})=\overline{\varphi}(H)$.
Let 
$${\bf a}(s+1)={\bf a},\quad I_{s+1}=\{i\mid {\bf a}{(s+1)}_i=1\},$$
then $I_{s+1}$ is a proper subset of $I_s$.

This process must terminate at some $t\ge 1$ with 
$|I_t|\le k\sum_{j=1}^r(p^{\alpha_j}-1)$
meaning that the statement of the theorem is valid with
$$U=\{h_i\mid i\in I_t\}=U({\bf a}(t)).$$

\section{Nilpotent rings} 

\noindent
All the results claimed in Section 1 can be easily reduced to the 
following general statement.

\begin{proposition}
Let $R$ be an arbitrary finite ring, $g\in R[x_1,\ldots,x_n]$
a standard polynomial, written as a sum of monomials. If each 
such monomial contains at most $k$ different variables, 
then $O\left(n^{k|R|}\right)$ evaluations 
suffice to determine the range of $g$.
\label{folemma}
\end{proposition}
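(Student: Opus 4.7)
The plan is to use Theorem \ref{additive} to show that any value attained by $g$ is already attained at a substitution whose support (the set of nonzero coordinates) has size at most $k|R|$, and then simply to count such short-support substitutions. Since Theorem \ref{additive} is stated for abelian $p$-groups while $R^+$ may involve several primes, the argument begins by splitting $R$ prime by prime.

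First, I would decompose $R$ as a ring direct product $R \cong \prod_p R_p$, where $R_p$ is the $p$-primary component of the additive group $R^+$; this is a legitimate ring decomposition since $R_p R_q$ is annihilated by coprime integers for $p \ne q$ and hence vanishes. Writing each coefficient of $g$ as the sum of its $p$-primary parts produces $g = \sum_p g_p$ with $g_p \in R_p[x_1,\dots,x_n]$ a standard polynomial whose monomials still involve at most $k$ distinct variables. For any $\mathbf{c} \in R^n$ one has $g(\mathbf{c}) = \sum_p g_p(\mathbf{c}^{(p)})$, where $\mathbf{c}^{(p)} \in R_p^n$ is the componentwise $p$-part, so $\Range(g) = \prod_p \Range(g_p)$ inside $\prod_p R_p$. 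It therefore suffices to bound, for each prime $p$ dividing $|R|$, the evaluations of $g_p$ needed to determine $\Range(g_p)$; these can be realized via evaluations of $g$ at substitutions with vanishing components outside the $p$-part.

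Next, fix a prime $p$ and an arbitrary $\mathbf{c} \in R_p^n$, and let $H = \{1,\dots,n\}$. Define $\varphi \colon P_k(H) \to R_p^+$ by setting $\varphi(X)$ to be the sum of $M(\mathbf{c})$ over all monomials $M$ of $g_p$ whose variable set is exactly $X$ (so $\varphi(\emptyset)$ absorbs the constant part). The key identity is that for every $U \subseteq H$, the truncated substitution $\mathbf{c}|_U$ (which agrees with $\mathbf{c}$ on $U$ and vanishes outside) satisfies
$$g_p(\mathbf{c}|_U) \;=\; \sum_{X \subseteq U,\,|X|\le k}\varphi(X) \;=\; \overline{\varphi}(U),$$
because any monomial using a variable outside $U$ contains a factor $0$ under $\mathbf{c}|_U$. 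In particular $g_p(\mathbf{c}) = \overline{\varphi}(H)$. Since $R_p^+ \cong \bigoplus_j \Z/p^{\alpha_j}\Z$ with $\sum_j(p^{\alpha_j}-1) \le |R_p|-1$, Theorem \ref{additive} then supplies a set $U \subseteq H$ with $|U| \le k(|R_p|-1)$ and $g_p(\mathbf{c}|_U) = g_p(\mathbf{c})$.

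Finally, every value of $\Range(g_p)$ is realized at a substitution whose support has size at most $k(|R_p|-1)$, and the number of such substitutions in $R_p^n$ is $\sum_{s\le k(|R_p|-1)} \binom{n}{s}|R_p|^s = O(n^{k(|R_p|-1)})$. Summing over the finitely many primes dividing $|R|$ and using $|R_p| \le |R|$ yields $O(n^{k|R|})$ evaluations of $g$ in total, as claimed. I expect the main technical point to be the prime-by-prime reduction: verifying that $R = \prod_p R_p$ is a genuine ring direct product (so that $g$ splits correctly as $\sum_p g_p$), that $g_p$ inherits the ``at most $k$ variables per monomial'' structure, and that evaluations of $g$ at suitable single-prime-supported substitutions harvest the component ranges without double counting. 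The combinatorial core --- producing a small $U$ via Theorem \ref{additive} --- follows essentially automatically once the identity $g_p(\mathbf{c}|_U) = \overline{\varphi}(U)$ is noticed.
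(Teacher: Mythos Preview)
Your proof is correct and follows essentially the same approach as the paper: decompose $R$ into its primary ring components, apply Theorem~\ref{additive} inside each $R_p^+$ to a function $\varphi$ recording the contribution of each variable-subset to $g_p(\mathbf{c})$, and use the resulting small-support substitution to bound the evaluation count. The only cosmetic differences are the order of presentation (the paper treats the $p$-group case first and then reduces the general case to it) and the final bookkeeping: the paper effectively multiplies the per-prime evaluation counts to get $O\bigl(n^{k\sum_p |R_p|}\bigr)=O\bigl(n^{k|R|}\bigr)$, whereas you sum them, which also lands in $O\bigl(n^{k|R|}\bigr)$.
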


\begin{proof}
Assume first that the cardinality of $R$ is a power of a prime $p$.
That is, the additive group of $R$ is a finite abelian $p$-group:
$$R^+\cong \Z/p^{\alpha_1}\Z\oplus\ldots\oplus\Z/p^{\alpha_r}\Z.$$
Suppose that $g$ admits the value $v$: there are elements
$c_1,\ldots,c_n\in R$ such that $g(c_1,\ldots,c_n)=v$.
For each subset $X$ of $H=\{1,2,\ldots,n\}$ with $|X|\le k$, denote by
$g_X$ the sum of those monomial terms of $g$, which contain exactly the
variables $x_i$ with $i\in X$. Writing $\varphi(X)=g_X(c_1,\ldots,c_n)$ we have
$v=\sum_{X\in P_k(H)}\varphi(X)=\overline{\varphi}(H)$. 
Theorem \ref{additive} guarantees the existence of a subset $U$ of
$\{1,2,\ldots,n\}$ with
$$|U|\le k\sum_{j=1}^r(p^{\alpha_j}-1)\le k|R|$$
and $\overline{\varphi}(U)=\overline{\varphi}(H)=v$.
Putting $c'_i=c_i$ for $i\in U$ and $c'_i=0$ for $i\not\in U$ we have
$$g_X(c'_1,\ldots,c'_n)=
\left\{ \begin{array}{ll} g_X(c_1,\ldots,c_n)
& \mbox{if $X\subseteq U$,}\\
\hfill 0 \hfill & \mbox{otherwise.}\end{array}\right.$$
It follows that
\begin{eqnarray*}
g(c'_1,\ldots,c'_n)&=&\sum_{X\in P_k(H)}g_X(c'_1,\ldots,c'_n)=
\sum_{X\subseteq U}g_X(c_1,\ldots,c_n)\\
&=&\sum_{X\subseteq U}\varphi(X)=\overline{\varphi}(U)=\overline{\varphi}(H)=v.
\end{eqnarray*}
Consequently, to determine the range of $g$ it suffices to substitute
into $g$ only those $n$-tuples $(c_1,\ldots,c_n)$ in which all but at most
$k|R|$ elements are equal to 0. The number of such $n$-tuples is
$$\sum_{i=0}^{k|R|}\binom{n}{i}|R|^i\le
(k|R|+1)|R|^{k|R|}n^{k|R|}=O\left(n^{k|R|}\right),$$
where the implied constant only depends on $k$ and $R$.

Turning to the general case, assume that $R^+=G_1\oplus\ldots \oplus G_s$,
where $G_i$ is an abelian group of order $p_i^{\beta_i}$, the primes
$p_1,\ldots,p_s$ being pairwise different. Each set defined by
$$R_i=\{r\in R\mid p_i^{\beta_i}r=0\}$$
is an ideal in $R$ whose additive group must be identical to $G_i$. It
follows that $R=R_1\oplus\ldots \oplus R_s$, and there are natural
homomorphisms $\pi_i\colon R\to R_i$ such that $\pi_1+\ldots+\pi_s$ is
the identical map form $R$ to $R$.  
Thus, if the $n$-tuple
${\bf c}\in R^n$ is written as ${\bf c}={\bf c}_1+\ldots+{\bf c}_s$
with ${\bf c}_i\in (R_i)^n$, then $g({\bf c})=
g_1({\bf c}_1)+\ldots+g_s({\bf c}_s)$,
where $g_i$ is the polynomial over
$R_i$ obtained from $g$ by replacing each coefficient in $g$ by its image
under $\pi_i$. Accordingly, the value set of $g$ is the 
Minkowski-sum of the value
sets of the polynomials $g_i$, and in view of the first part of the proof
it can be found using
$$O\left(\prod_{i=1}^sn^{k|R_i|}\right)=
O\left(n^{k\sum_{i=1}^s|R_i|}\right)=O\left(n^{k|R|}\right)$$
substitutions.
\end{proof}

\begin{remark}\label{explicit}
As part of the preprocessing, one can compute in $O(1)$ time the factorization
$|R|=p_1^{\beta_1}\cdots p_s^{\beta_s}$, and then the ideals $R_i$ in the direct
decomposition of $R$. To compute the range of $g$ then it is enough to
evaluate $g$ on the elements of the small set
\[
S=\bigl\{ {\bf c}\in R^n : \bigl|\{ j:({\bf c}_i)_j\ne 0\}\bigr|\le 
kp_i^{\beta_i}\ \text{for every}\  1\le i\le s\bigr\}.
\]
According to the above proof, $|S|=O\left(n^{k|R|}\right)$.
\end{remark}

\begin{theorem}\label{main}
Let $R$ be a nilpotent ring of size $m$. Let $f\in R[x_1,x_2,\dots, x_n]$ 
be an $n$-variable polynomial. Then, it can be decided in
$O\left(||f||n^{m\log m}\right)$ time whether or not $f$ has a root in $R$. 
\end{theorem}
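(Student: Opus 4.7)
My plan is to derive Theorem \ref{main} as a consequence of Proposition \ref{folemma} via two observations about nilpotent rings. First, since the nilpotency class $\ell$ of $R$ satisfies $R^{(\ell)}=0$, any monomial of length at least $\ell$ evaluates to zero on every input, because its value is a product of $\ell$ ring elements. Hence $f$ is equivalent to a standard polynomial $g$ whose monomials each use at most $\ell-1$ distinct variables, which is exactly the hypothesis of Proposition \ref{folemma} with $k=\ell-1$.

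Second, I would bound $\ell$ by $\log m$. Using $R^{(i+1)}=R\cdot R^{(i)}$ (in the sense of generated additive subgroup), one sees that once two consecutive terms of the chain $R=R^{(1)}\supseteq R^{(2)}\supseteq\cdots$ coincide the chain stabilizes; so the descending chain
\[
R=R^{(1)}\supsetneq R^{(2)}\supsetneq\cdots\supsetneq R^{(\ell)}=0
\]
must have $\ell-1$ strict inclusions. By Lagrange's theorem each such strict inclusion of additive subgroups at least doubles the order, hence $m=|R|\ge 2^{\ell-1}$ and $(\ell-1)m\le m\log_2 m$.

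With these pieces in hand, I apply Proposition \ref{folemma} to $g$---without actually constructing $g$---to conclude that $\Range(f)=\Range(g)$ is determined by evaluation on the set $S\subseteq R^n$ of Remark \ref{explicit}, which has cardinality $O(n^{k|R|})=O(n^{m\log m})$ and depends only on $R$, $n$, and $k$. The algorithm enumerates $S$, evaluates $f$ at every ${\bf c}\in S$ in $O(\|f\|)$ time (by the definition of $\|f\|$), and answers ``solvable'' exactly when $0$ appears among the outputs, for a total of $O(\|f\|\,n^{m\log m})$.

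The only subtlety---which is a non-obstacle rather than a genuine obstacle---is that one must \emph{not} actually expand $f$ into standard form, since the expansion can be exponentially long in $\|f\|$. The argument invokes the equivalent polynomial $g$ only to justify Proposition \ref{folemma}; once that step is done, all evaluations are carried out on $f$ itself, using the ring-determined set $S$. Given the proposition, the theorem is essentially the combination of this observation with the elementary bound $\ell-1\le \log_2 m$.
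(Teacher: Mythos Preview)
Your proposal is correct and follows essentially the same route as the paper's proof: reduce to Proposition~\ref{folemma} with $k=\ell-1$ via the equivalence of $f$ with a standard polynomial whose monomials have at most $\ell-1$ distinct variables, evaluate $f$ (not the expansion) on the explicit set $S$ of Remark~\ref{explicit}, and bound the exponent using $\ell-1\le\log_2 m$ from the strictly decreasing chain of ideals. Your extra remarks---the stabilization argument for strict decrease and the explicit warning against expanding $f$---are sound elaborations, but do not change the underlying strategy.
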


\begin{proof} 
Denote by $\ell$ the nilpotency class of $R$. Thus, $R^{(\ell)}=0$. Therefore  
$f$ is equivalent to a 
standard polynomial $\overline{f}\in R[x_1,x_2,\dots, x_n]$
in which every monomial term contains at most $\ell-1$ different variables.
By Proposition~\ref{folemma}, the range of $f$, which is the same as the range
of $\overline{f}$, can be found by evaluating
$f$ on a subset $S$ of $R^n$, of size $O\left(n^{(\ell-1)m}\right)$;
see Remark \ref{explicit} for an explicit description.
In particular, one can check whether $f$ admits the value 0, 
and it can be also decided if $f$ is identically zero, or not.
Note that the chain of ideals 
$R\triangleright R^{(2)}\triangleright \dots \triangleright R^{(\ell)}=0$
is strictly decreasing. 
As $|R^{(i)}/R^{(i+1)}|\geq 2$, it follows that $\ell-1\leq \log_2 m$.
\end{proof}

\begin{remark}
During the preparation of this article F\"oldv\'ari \cite{Foldvari} 
proved a similar result with an entirely different method.
His bound is $O\left(||f||^{m^{2\log m}\log^5m}\right)$. 
Note that one may assume that $||f||\ge n-1$, 
otherwise the set of variables that occur in $f$ can be restricted to
a proper subset of $\{x_1,\dots,x_n\}$.
If this is the case, then Theorem \ref{main} implies 
the bound $O\left(||f||^{m\log m}\right)$.
\end{remark}

The first author was supported by Bolyai Research Fellowship
and by National Research, Development and Innovation Office NKFIH Grant
K 120154.  
Research of the second author was supported by NKFIH Grant K 109185.

\end{document}